\theoremstyle{plain}
\newtheorem{theorem}{Theorem}[section]
\newtheorem{corollary}[theorem]{Corollary}
\theoremstyle{definition}
\newtheorem{remark}[theorem]{Remark}
\def\<{\langle}
\def\>{\rangle}
\begin{document}
\title
        {On the recurrence formula of the Euler zeta functions}
        \author{Joonhyung Kim}

\address{Joonhyung Kim \\
    Department of Mathematics Education, Hannam University, 70 Hannam-ro, Daedeok-gu, Daejeon 306-791, Republic of Korea}
\email{calvary\char`\@hnu.kr}%
        \date{}
        \maketitle

\begin{abstract}
In this paper, we find a new recurrence formula of the Euler zeta functions.
\end{abstract}
\footnotetext[1]{2010 {\sl{Mathematics Subject Classification.}}
42B05, 11B68, 11S40, 11S80.} \footnotetext[2]{{\sl{Key words and phrases.}}
Euler zeta function.}
\footnotetext[3]{\sl{This work was supported by 2015 Hannam University Research Fund.}}

\section{Introduction}
The Euler zeta function is defined as $\displaystyle{\zeta_E(s)=\sum^{\infty}_{n=1}\frac{(-1)^{n-1}}{n^s}}$ for $s \in \mathbb{C}$. In \cite{LR}, Lee-Ryoo found the following recurrence formula of $\zeta_E(2s)$ for $s \in \mathbb{N}$ using Fourier series.

\begin{theorem}(Theorem 4 of \cite{LR})
For $s \ge 2(\in \mathbb{N})$ and $\zeta_E(2)=\frac{\pi^2}{12}$,
\begin{align*}
\zeta_E(2s) & =\frac{(-1)^s(2\pi)^{2s}}{_{2s}P_{2s-1}}\{\frac{1}{2^{2s+1}}\frac{2^{2s+1}-12s^2+3}{(2s-1)(2s+3)}\\ & -\sum^{s-1}_{k=1}(-1)^k\frac{1}{(2\pi)^{2k}}\zeta_E(2k)(_{2s}P_{2k-1}-_{2s-2}P_{2k-1}) \}.
\end{align*}
\end{theorem}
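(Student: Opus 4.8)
The plan is to derive the identity from the Fourier series of a suitable polynomial, the natural choice being $f(x)=x^{2s}$, regarded as a periodic function whose fundamental frequencies are integer multiples of $2\pi$ (this normalization is what makes the powers of $2\pi$ in the statement appear). Write the Fourier cosine expansion $f(x)=\tfrac{a_0}{2}+\sum_{n\ge1}a_n\cos(2\pi nx)$.

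The first step is to compute $a_n=2\int f(x)\cos(2\pi nx)\,dx$ for $n\ge1$ by repeated integration by parts. Each integration by parts lowers the power of $x$ by one and throws off a boundary term; since $\sin$ vanishes and $\cos$ equals $\pm1$ at the relevant endpoints, the surviving boundary contributions are rational in $n$ and assemble, after $2s$ steps, into a finite sum $a_n=(-1)^n\sum_{k=1}^{s}\dfrac{c_{k,s}}{(2\pi n)^{2k}}$, with each $c_{k,s}$ a product of consecutive integers — that is, a falling-factorial expression in $2s$ — so that the coefficient of the top term $1/(2\pi n)^{2s}$ is, up to sign, $(2s)!={}_{2s}P_{2s-1}$. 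I would organize this by writing the relevant partial integrals as a two-term recursion $J_m=\alpha_m-\beta_m J_{m-2}$ and unrolling it; this is the step in which the permutation symbols ${}_{2s}P_{2k-1}$ emerge.

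The second step is to evaluate the Fourier identity at a well-chosen point $x_0$ (either $x_0=0$ or the midpoint of the period), where $\cos(2\pi nx_0)\in\{1,(-1)^n\}$. Since $f$ is continuous and $a_n=O(n^{-2})$, the series converges to $f(x_0)$, in fact uniformly, so there is no subtlety about pointwise equality. Substituting the formula for $a_n$ converts $\sum_n a_n\cos(2\pi nx_0)$ into $\sum_{k=1}^{s}\dfrac{c_{k,s}}{(2\pi)^{2k}}\sum_{n\ge1}\dfrac{(-1)^n}{n^{2k}}=-\sum_{k=1}^{s}\dfrac{c_{k,s}}{(2\pi)^{2k}}\zeta_E(2k)$, giving one explicit linear relation among $\zeta_E(2),\zeta_E(4),\dots,\zeta_E(2s)$, with the additive constant coming from $\tfrac{a_0}{2}$ together with $f(x_0)$. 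Solving this relation for $\zeta_E(2s)$ — its coefficient is $\propto(2\pi)^{-2s}\,{}_{2s}P_{2s-1}\neq0$ — yields the recurrence. The expression ${}_{2s}P_{2k-1}-{}_{2s-2}P_{2k-1}$ appears naturally once one forms the difference of the relations obtained at the parameters $s$ and $s-1$ before isolating $\zeta_E(2s)$.

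The main obstacle is purely computational: correctly propagating all boundary terms and signs through the $2s$ integrations by parts, consolidating the products of consecutive integers into the permutation notation, and simplifying the additive constant to the exact shape $\frac{1}{2^{2s+1}}\cdot\frac{2^{2s+1}-12s^2+3}{(2s-1)(2s+3)}$. Nothing here is conceptually deep, but this is exactly where sign and indexing errors creep in, so as a check I would confirm that the resulting formula reproduces $\zeta_E(2)=\pi^2/12$ and $\zeta_E(4)=7\pi^4/720$.
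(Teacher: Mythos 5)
Your overall strategy --- expand $x^{2s}$ in a Fourier cosine series, evaluate at a point to obtain one linear relation among $\zeta_E(2),\dots,\zeta_E(2s)$, and subtract the relations at parameters $s$ and $s-1$ --- is exactly the strategy used here (and for the paper's own Theorem 2.1). The genuine gap is your choice of evaluation point, together with the claim that the $(2\pi)^{2k}$ weights come from the period normalization. They do not: since $x^{2s}$ is homogeneous, rescaling the interval multiplies the whole Fourier identity by a constant, so the relation obtained depends only on the \emph{relative} position of the evaluation point within the period. At your first choice, $x_0=0$, the relation is $\sum_{k=1}^{s}(-1)^k\,{}_{2s}P_{2k-1}\,\pi^{-2k}\zeta_E(2k)=-\tfrac{1}{2(2s+1)}$, with weights $\pi^{-2k}$ rather than $(2\pi)^{-2k}$; subtracting consecutive relations then yields the paper's Theorem 2.1, not the quoted theorem of Lee--Ryoo. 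At your second choice, the half-period point where $\cos(2\pi n x_0)=(-1)^n$, that sign cancels the $(-1)^n$ already present in $a_n$ from the boundary terms, so the inner sums become $\sum_n n^{-2k}=\zeta(2k)$ --- Riemann zeta values, not $\zeta_E(2k)$. The relation that actually underlies the stated formula comes from the \emph{quarter}-period point: in Lee--Ryoo's normalization $f(x)=x^{2s}$ on $(-2,2)$ with frequencies $n\pi/2$, one substitutes $x=1$, where $\cos(n\pi/2)$ kills the odd harmonics and the surviving even harmonics $n=2j$ contribute $\sum_j(-1)^j(2j)^{-2k}=-2^{-2k}\zeta_E(2k)$; this extra $2^{-2k}$ is the true source of the $(2\pi)^{2k}$, and $f(1)=1$ together with the constant term gives $\sum_{k=1}^{s}(-1)^k\,{}_{2s}P_{2k-1}\,(2\pi)^{-2k}\zeta_E(2k)=\frac{2s+1-2^{2s}}{(2s+1)2^{2s+1}}$, from which the recurrence follows by the subtraction you describe.

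One caution about your proposed sanity check: the additive constant as printed, $\frac{1}{2^{2s+1}}\cdot\frac{2^{2s+1}-12s^2+3}{(2s-1)(2s+3)}$, fails the test at $s=2$ (it yields $\frac{5\pi^4}{336}$ instead of $\frac{7\pi^4}{720}$); subtracting the two quarter-point relations actually produces $(2s-1)(2s+1)$ in the denominator. So if your verification flags a discrepancy, it may lie in the displayed constant rather than in your bookkeeping --- but the structural point above (evaluate at the quarter-period point, not at $0$ or the half-period) is what your plan is missing.
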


The proof is very elementary. They first considered a function $f(x)=x^{2m}$ for $-2<x<2$ and found Fourier coefficients. Then $f(x)$ can be written as
$$
f(x)=\frac{2^{2m}}{2m+1}+\sum^{\infty}_{n=1}a_n\cos\frac{n\pi x}{2},
$$
where $\displaystyle{a_n=\sum^m_{k=1}(-1)^{k+1}}$$\displaystyle{_{2m}P_{2k-1}2^{2m-2k+1}\frac{2^{2k}}{n^{2k}\pi^{2k}}\cos n\pi}$.\\
By substituting $x=1$, they got
$\displaystyle{\sum^s_{k=1}(-1)^{k}}$$\displaystyle{_{2s}P_{2k-1}\frac{1}{2^{2k}\pi^{2k}}\zeta_E(2k)=\frac{2s+1-2^{2s}}{(2s+1)(2^{2s+1})}}$.
In the case of $m=1$, one easily gets $\displaystyle{\zeta_E(2)=\frac{\pi^2}{12}}$. Using above equation, it is easy to get the formula in the theorem. See \cite{LR} for more details.\\
The goal of this article is to get more refined version of the recurrence formula of the Euler zeta functions.

\section{The Recurrence Formula}

The following is the main result.

\begin{theorem}
For $s \ge 2(\in \mathbb{N})$ and $\zeta_E(2)=\frac{\pi^2}{12}$,
$$
\zeta_E(2s)=\frac{(-1)^s(\pi)^{2s}}{_{2s}P_{2s-1}}\{\frac{1}{(2s-1)(2s+1)}-\sum^{s-1}_{k=1}(-1)^k\frac{1}{\pi^{2k}}\zeta_E(2k)(_{2s}P_{2k-1}-_{2s-2}P_{2k-1}) \}.
$$
\end{theorem}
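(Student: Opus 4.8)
The plan is to run the Fourier-series argument of Lee--Ryoo, but on the interval $(-1,1)$ rather than $(-2,2)$, and to substitute $x=0$ rather than $x=1$: the shorter interval is what turns the factor $2\pi$ into $\pi$, and evaluating at the origin is what brings in the alternating (Euler) zeta values directly. So, fix an integer $m\ge 1$ and expand $f(x)=x^{2m}$ on $(-1,1)$ in its Fourier series. Since $f$ is even the sine coefficients vanish, the constant term is $\tfrac12 a_0=\tfrac{1}{2m+1}$, and by evenness
$$
a_n=\int_{-1}^{1}x^{2m}\cos(n\pi x)\,dx=2\int_{0}^{1}x^{2m}\cos(n\pi x)\,dx .
$$
Computing $a_n$ by iterated integration by parts — each double step lowers the exponent by $2$, the boundary contributions at $x=0$ all vanish because they carry a positive power of $x$, and the $\sin(n\pi x)$ boundary terms vanish at $x=1$ — one obtains the exact analogue of the coefficient formula quoted from \cite{LR},
$$
a_n=2(-1)^{n}\sum_{k=1}^{m}(-1)^{k+1}\,\frac{{}_{2m}P_{2k-1}}{(n\pi)^{2k}} .
$$
Since $a_n=O(n^{-2})$ the series converges absolutely and uniformly, and because the $2$-periodic extension of $x^{2m}$ from $(-1,1)$ is continuous, the expansion is valid at every real $x$, in particular at $x=0$.

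Next I would set $x=0$. Then $f(0)=0$ and $\cos 0=1$, so $0=\tfrac{1}{2m+1}+\sum_{n\ge 1}a_n$. Interchanging the finite sum over $k$ with the (absolutely convergent) sum over $n$ and inserting $\sum_{n=1}^{\infty}(-1)^{n}n^{-2k}=-\zeta_E(2k)$ yields the key identity
$$
\sum_{k=1}^{m}(-1)^{k}\,\frac{{}_{2m}P_{2k-1}}{\pi^{2k}}\,\zeta_E(2k)=-\frac{1}{2(2m+1)} \qquad (E_m).
$$
The case $m=1$ of $(E_m)$ already gives $\zeta_E(2)=\pi^2/12$, consistent with the normalization in the statement.

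The theorem now follows by forming $(E_s)-(E_{s-1})$ for $s\ge 2$. The index $k=s$ appears only in $(E_s)$, where the coefficient of $\zeta_E(2s)$ is $(-1)^{s}\,{}_{2s}P_{2s-1}/\pi^{2s}$; for $1\le k\le s-1$ the two sums combine into the coefficient ${}_{2s}P_{2k-1}-{}_{2s-2}P_{2k-1}$; and on the right-hand side $-\tfrac{1}{2(2s+1)}+\tfrac{1}{2(2s-1)}=\tfrac{1}{(2s-1)(2s+1)}$. Solving for $\zeta_E(2s)$ and using ${}_{2s}P_{2s-1}=(2s)!$ produces precisely the stated formula. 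The only parts requiring care are the sign bookkeeping in the repeated integration by parts — in particular that the coefficients in $a_n$ carry the alternating sign $(-1)^{k+1}$ and that the sum in $a_n$ runs exactly from $k=1$ to $k=m$ — together with the (easy, but needed) justification for evaluating the series term by term at $x=0$; once $(E_m)$ is in hand, the remainder is elementary algebra.
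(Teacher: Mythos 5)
Your proposal is correct and follows essentially the same route as the paper: the key identity $(E_m)$, namely $\sum_{k=1}^{m}(-1)^{k}\,{}_{2m}P_{2k-1}\,\pi^{-2k}\,\zeta_E(2k)=-\tfrac{1}{2(2m+1)}$, and the final subtraction $(E_s)-(E_{s-1})$ are exactly the paper's steps, which it obtains by substituting $x=0$ into Lee--Ryoo's expansion of $x^{2m}$ on $(-2,2)$. Your only deviation is cosmetic: you rescale to $(-1,1)$ and recompute the Fourier coefficients from scratch, whereas in the paper the powers of $2$ simply cancel at $x=0$, so the factor $\pi$ (rather than $2\pi$) appears either way.
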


\begin{proof}
We'll use the same function $f(x)=x^{2m}$ for $-2<x<2$. As in the proof of \cite{LR},
$$
f(x)=\frac{2^{2m}}{2m+1}+\sum^{\infty}_{n=1}a_n\cos\frac{n\pi x}{2},
$$
where $\displaystyle{a_n=\sum^m_{k=1}(-1)^{k+1}}$$\displaystyle{_{2m}P_{2k-1}2^{2m-2k+1}\frac{2^{2k}}{n^{2k}\pi^{2k}}\cos n\pi}$.\\
Now we substitute $x=0$. Then,
\begin{align*}
0 & =\frac{2^{2m}}{2m+1}+\sum^{\infty}_{n=1}\sum^{m}_{k=1}(-1)^{k+1}(_{2m}P_{2k-1})2^{2m-2k+1}\frac{2^{2k}}{n^{2k}\pi^{2k}}\cos n\pi\\ 
& = \frac{2^{2m}}{2m+1}+\sum^{\infty}_{n=1}\sum^{m}_{k=1}(-1)^{k+1}(_{2m}P_{2k-1})2^{2m+1}\frac{(-1)^n}{n^{2k}\pi^{2k}}\\
& = \frac{2^{2m}}{2m+1}+\sum^{m}_{k=1}(-1)^k(_{2m}P_{2k-1})2^{2m+1}\frac{1}{\pi^{2k}}\zeta_E(2k).
\end{align*}
Therefore $\displaystyle{\sum^s_{k=1}(-1)^{k}}$$\displaystyle{_{2s}P_{2k-1}\frac{1}{\pi^{2k}}\zeta_E(2k)=-\frac{1}{2(2s+1)}}$ and hence we also get  $\displaystyle{\sum^{s-1}_{k=1}(-1)^{k}}$$\displaystyle{_{2s-2}P_{2k-1}\frac{1}{\pi^{2k}}\zeta_E(2k)=-\frac{1}{2(2s-1)}}$. By subtracting the second equation from the first equation, we get the result.
\end{proof}

\begin{corollary}
For $s \ge 2(\in \mathbb{N})$ and $\zeta_E(2)=\frac{\pi^2}{12}$,
$$
\zeta_E(2s)=\frac{(-1)^s\pi^{2s}}{2s-1}\{\frac{1}{(2s+1)!}-\frac{1}{s}\sum^{s-1}_{k=1}(-1)^k\frac{1}{\pi^{2k}}\zeta_E(2k)\frac{(2k-1)(2s-k)}{(2s-2k+1)!} \}.
$$
\end{corollary}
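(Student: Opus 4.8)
The plan is to obtain the Corollary purely by rewriting the permutation symbols in the Theorem in terms of factorials, using ${}_nP_r=n!/(n-r)!$. Since ${}_{2s}P_{2s-1}=(2s)!/1!=(2s)!$, the leading coefficient of the Theorem is $\frac{(-1)^s\pi^{2s}}{(2s)!}$; I would pull the factor $\frac{2s-1}{(2s)!}$ inside the braces (compensating by replacing the coefficient with $\frac{(-1)^s\pi^{2s}}{2s-1}$) and then simplify the two kinds of term. For the constant term this is immediate:
$$\frac{2s-1}{(2s)!}\cdot\frac{1}{(2s-1)(2s+1)}=\frac{1}{(2s)!\,(2s+1)}=\frac{1}{(2s+1)!},$$
which is exactly the first term inside the braces of the Corollary.

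For the summands I would evaluate the difference of permutation symbols in closed form. Writing ${}_{2s}P_{2k-1}=\frac{(2s)!}{(2s-2k+1)!}$ and ${}_{2s-2}P_{2k-1}=\frac{(2s-2)!}{(2s-2k-1)!}$ (both valid for $1\le k\le s-1$) and pulling out the common factor $\frac{(2s-2)!}{(2s-2k+1)!}$ gives
$${}_{2s}P_{2k-1}-{}_{2s-2}P_{2k-1}=\frac{(2s-2)!}{(2s-2k+1)!}\left[(2s)(2s-1)-(2s-2k+1)(2s-2k)\right].$$
The bracket is handled by the elementary identity $a(a-1)-(a-b+1)(a-b)=(b-1)(2a-b)$ with $a=2s$ and $b=2k$, which gives $(2k-1)(4s-2k)=2(2k-1)(2s-k)$. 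Combining this with $\frac{(2s-2)!}{(2s)!}=\frac{1}{(2s)(2s-1)}$ yields
$$\frac{2s-1}{(2s)!}\left({}_{2s}P_{2k-1}-{}_{2s-2}P_{2k-1}\right)=\frac{2(2k-1)(2s-k)}{(2s)(2s-2k+1)!}=\frac{1}{s}\cdot\frac{(2k-1)(2s-k)}{(2s-2k+1)!}.$$

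Finally I would substitute these two evaluations back into the Theorem. Factoring out $\frac{(-1)^s\pi^{2s}}{2s-1}$ as above, the constant term inside the braces becomes $\frac{1}{(2s+1)!}$, each summand acquires the factor $\frac{1}{s}\cdot\frac{(2k-1)(2s-k)}{(2s-2k+1)!}$ in place of ${}_{2s}P_{2k-1}-{}_{2s-2}P_{2k-1}$, and pulling the common $\frac{1}{s}$ in front of the sum produces precisely the formula in the statement. The only real obstacle is bookkeeping: keeping the factorial index shifts straight and, above all, getting the sign in the quadratic identity $(2s)(2s-1)-(2s-2k+1)(2s-2k)=2(2k-1)(2s-k)$ right; once that identity is verified, the rest of the derivation is forced.
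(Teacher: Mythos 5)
Your proposal is correct and is essentially the paper's own argument: the paper also deduces the Corollary from the Theorem by using ${}_{2s}P_{2s-1}=(2s)!$ together with the identity ${}_{2s}P_{2k-1}-{}_{2s-2}P_{2k-1}=\frac{2(2s-2)!(2k-1)(2s-k)}{(2s-2k+1)!}$, which you verify explicitly. The only difference is that the paper states these identities without computation, whereas you supply the factorization and the quadratic identity behind them.
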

\begin{proof}
Since $\displaystyle{_{2s}P_{2s-1}=(2s)!}$ and $\displaystyle{_{2s}P_{2k-1}-_{2s-2}P_{2k-1}=\frac{2(2s-2)!(2k-1)(2s-k)}{(2s-2k+1)!}}$, the corollary immediately follows.
\end{proof}

\begin{remark}
One can get many such recurrence formulas by considering different functions. Even the same function, one can also get many formulas by substituting different values.
\end{remark}


\end{document}